\documentclass[11pt]{article}

\usepackage[T1]{fontenc}
\usepackage[utf8]{inputenc}
\usepackage{amsmath,amssymb,amsthm,mathtools}
\usepackage[margin=1in]{geometry}
\usepackage{microtype}
\usepackage{enumitem}
\usepackage{listings}
\usepackage[colorlinks=true,linkcolor=blue,citecolor=blue,urlcolor=blue]{hyperref}

\newtheorem{theorem}{Theorem}
\newtheorem{lemma}{Lemma}
\newtheorem{proposition}{Proposition}
\usepackage[capitalise,nameinlink]{cleveref}

\newcommand{\R}{\mathbb R}
\newcommand{\dd}{\,d}
\newcommand{\calH}{\mathcal H}

\def\isarxiv{1}

\begin{document}

\title{A Counterexample to the Gaussian Completely Monotone Conjecture}
\ifdefined\isarxiv
\author{Yuzhou Gu\thanks{sevenkplus.g@gmail.com} \and Mark Sellke\thanks{msellke@fas.harvard.edu}}
\else
\author{}
\fi
\date{}
\maketitle

\begin{abstract}
We provide an explicit probability measure on \(\R\) for which the fifth time derivative of the entropy along the heat flow is positive at some time. This disproves the Gaussian completely monotone (GCM) conjecture (Cheng--Geng '15) and therefore also the Gaussian optimality conjecture (McKean '66) and the entropy power conjecture (Toscani '15).
Our proof also implies the existence of a \emph{log-concave} probability measure on \(\R\) for which the GCM conjecture fails at some order. The explicit counterexample was found by GPT-5.5 Pro.
\end{abstract}

\section{Introduction} \label{sec:intro}
Let \(P_t=e^{t\partial_x^2}\) denote the heat semigroup on \(\R\). For a real variable \(X\) with law \(\mu\), \(P_t \mu\) is the law of \(X+\sqrt{2t} G\), where \(G\) is a standard Gaussian independent of \(X\). For \(t>0\), this measure is represented by the smooth function
\[
   p_t*\mu,
   \qquad
   p_t(x)=(4\pi t)^{-1/2}e^{-\frac{x^2}{4t}}.
\]
We write
\[
   H_\mu(t)=\calH(P_t\mu)
   =\int_\R (p_t*\mu)(x)\log (p_t*\mu)(x)\,\dd x.
\]
The Gaussian completely monotone (GCM) conjecture \cite{cheng2015higher} states that
\begin{equation}
\label{eq:GCM}
   (-1)^m H_\mu^{(m)}(t)\ge0,
   \qquad m\ge1,t>0.
\end{equation}
A strengthening of this conjecture is the Gaussian optimality conjecture due to \cite[Section 12]{mckean1966speed}, which states that the left-hand side of \eqref{eq:GCM} is exactly minimized by Gaussian measures when \(\mu\) has fixed variance.
A further strengthening is the entropy power conjecture \cite{toscani2015concavity}, which asserts that
\[
(-1)^{m-1}\frac{d^m}{dt^m} N_\mu(t)\geq 0,
\qquad
N_\mu(t)=e^{-2H_\mu(t)}.
\]
The same conjectures extend to higher dimension \(d>1\) (with an additional factor of \(1/d\) in the exponent of the last expression). It is easy to see that McKean's conjecture is stronger than GCM, while \cite{wang2024entropy} showed that the entropy power conjecture implies McKean's conjecture.

In dimension \(1\), the GCM conjecture has been proved for derivatives of order \(m\le 4\) \cite{cheng2015higher}.
The higher-dimensional version is true up to order \(m\le 2\) via Costa's entropy power inequality \cite{costa1985new,villani2000short}, and in dimension \(d\le 4\) for \(m\le 3\) \cite{guo2022lower}.
The special case of log-concave probability measures has also attracted considerable interest \cite{toscani2015concavity,zhang2018gaussian,guo2022lower,wang2025higher}.
We refer the reader to \cite{ledoux2024differentials,wang2025higher} and references therein for further discussion of the history of these conjectures.

Our main result is that in dimension \(1\), the GCM conjecture fails at the fifth order, which is optimal by the result of \cite{cheng2015higher} mentioned above. Thus all three related conjectures are false.

\begin{theorem}\label{thm:gcmc}
There exists a probability measure \(\mu\) on \(\R\) and a time \(t_0>0\) such that
\[
   H_\mu^{(5)}(t_0)>0.
\]
\end{theorem}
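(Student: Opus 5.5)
We will exhibit an explicit measure and verify the sign of \(H_\mu^{(5)}\) at one time. The natural family is a mixture of two Gaussians (equivalently, a two-point measure \(\mu=(1-\varepsilon)\delta_0+\varepsilon\delta_a\) run for a short time), since then \(f_t=p_t*\mu\) is explicit. The first step is to write \(H^{(m)}\) in computable form. Since \(\partial_t f=f''\), we have \(H'(t)=-\int (f')^2/f=-I(f)\). Higher derivatives follow by differentiating under the integral, repeatedly substituting \(\partial_t f=f''\), and integrating by parts, which gives
\[
H^{(5)}(t)=-\frac{d^4}{dt^4}I(P_t\mu)=\int_\R f\,Q_5\!\left(\tfrac{f'}{f},\dots,\tfrac{f^{(5)}}{f}\right)\dd x ,
\]
with \(Q_5\) an explicit polynomial. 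The same route underlies the formulas of \cite{cheng2015higher} for \(m\le4\). We do not need any sum-of-squares reorganisation, only the raw expression, so that it can be evaluated numerically with rigorous error control.

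The second step is the choice of \(\mu\). We use scaling to fix \(t_0=1\) (the sign of \(H^{(5)}\) is invariant under \(x\mapsto\lambda x,\ t\mapsto\lambda^2 t\)). The heuristic is a small mass \(\varepsilon\) placed at distance \(a\) from a main Gaussian bump. In the region where the two components are comparable, the log-density has a sharp transition. This transition contributes terms to \(Q_5\) of varying sign whose size grows like powers of \(a\), whereas the Gaussian contribution is fixed and negative. We expect a window in \((\varepsilon,a)\), chosen perhaps with \(\log(1/\varepsilon)\approx a^2/4\), where the transition-layer contribution dominates with positive sign. To locate it, we first do a numerical search over \((\varepsilon,a)\) and possibly a third atom. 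We would also try a Laplace-type asymptotic analysis of the transition layer, writing \(\log f\) near the crossover point as a softplus in a rescaled variable. This reduces the leading term of \(H^{(5)}\) to a one-dimensional integral of a universal function whose sign can be checked.

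The third step is to certify the inequality rigorously for the chosen explicit parameters, say rational \(\varepsilon,a\). We would truncate the integral to \([-L,L]\) and bound the tails using Gaussian decay, since \(f^{(k)}/f\) grows at most polynomially. The integral over \([-L,L]\) would be evaluated with interval arithmetic, or with a quadrature rule plus an explicit derivative bound. The result is a certified positive lower bound.

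We expect the main obstacle to be finding parameters for which the sign is actually positive. The fifth derivative involves heavy cancellation, and the lower orders are known to satisfy the conjecture. The naive two-atom family might fail, in which case the search must widen to three or more atoms or to non-atomic shapes. For the log-concave remark, we would afterwards approximate the counterexample's \(P_{t}\mu\) by log-concave measures, or argue that \(P_{s}\mu\) for a suitable measure is log-concave and fails at some order.
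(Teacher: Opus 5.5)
\textbf{There is a genuine gap: you never produce a witness.} Your plan has the same overall shape as the paper's proof: a finitely supported \(\mu\), an exact integral formula for \(H_\mu^{(5)}(t_0)\), ball arithmetic on a bounded range, and a Gaussian tail bound. For the integrand, the paper differentiates \(g_t\log g_t\) directly, using Hermite polynomials for \(K_n=\partial_t^n g_t/g_t\) and a recursion for \(L_n=\partial_t^n\log g_t\), instead of your integrated-by-parts \(Q_5\); either would do. But for this existence statement, the explicit measure and its certified computation are the whole proof. You give no measure, no time and no certified value. Positivity rests on ``we expect a window'' and a search whose success you yourself doubt.

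The paper supplies exactly this. Its measure is the symmetric \(17\)-atom \(\mu=0.002\,\delta_0+\sum_{j=0}^{7}c_j(\delta_{1.8+j}+\delta_{-(1.8+j)})\) with \(c=(0.034,0.093,0.134,0.123,0.075,0.031,0.008,0.001)\), taken at \(t_0=1/3\). Arb ball arithmetic on \(22\) subintervals of \([0,25]\) gives the finite part. An explicit tail bound handles \(x\ge 25\): polynomial bounds on \(K_n\) and \(L_n\), with \(|\log g_{t_0}|\) controlled through the extreme atom. Together these certify \(0.36<H_\mu^{(5)}(1/3)<0.37\). This is delicate: the interval contributions are of both signs and as large as about \(219\), while the total is about \(0.37\). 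No sign heuristic can replace a certified computation on a specific measure.

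\textbf{Your heuristic for locating the witness also fails as stated.} Work at \(t_0=1\) in your window \(\log(1/\varepsilon)\approx a^2/4\).
\begin{itemize}
\item The crossover between \((1-\varepsilon)p_1(x)\) and \(\varepsilon p_1(x-a)\) is at \(x=\tfrac a2+\tfrac2a\log\tfrac{1-\varepsilon}{\varepsilon}\approx a\), where \(f\approx 2\varepsilon(4\pi)^{-1/2}\).
\item The softplus layer there has width of order \(1/a\).
\item Each term of \(Q_5\) is a product of ratios \(f^{(k)}/f\) of total order \(10\), each \(O(a^k)\) in the layer. So the layer contributes only \(O(\varepsilon a^{9})=O(a^{9}e^{-a^2/4})\).
\item Away from the layer, \(\log f\) agrees with a Gaussian log-density up to exponentially small errors. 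Those regions give \(-12(1-\varepsilon)-12\varepsilon\), up to truncation errors of the same size \(O(\varepsilon a^9)\).
\end{itemize}
Hence \(H^{(5)}(1)\to -12/t_0^5=-12\) along your regime. The polynomial growth in \(a\) is swamped by the exponentially small mass in the layer, so the proposed softplus asymptotics would confirm the wrong sign.

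A two-atom counterexample, if one exists (nothing in your proposal shows it does), would have to sit at moderate parameters with no asymptotic control. The argument then reduces entirely to the search and certification you have not carried out. The paper's counterexample is also structurally different from your two-atom picture. It is a many-atom comb on a shifted unit lattice with a bell-shaped weight profile and a tiny atom at the origin, observed at a time when neighbouring Gaussian components only partly overlap.
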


For log-concave probability measures, we show that the GCM conjecture also fails.
\begin{theorem}\label{thm:gcmc-log-conc}
There exists a log-concave probability measure \(\mu\) on \(\R\), a time \(t_0>0\), and an integer \(m\ge 1\), such that
\[
   (-1)^m H_\mu^{(m)}(t_0)<0.
\]
\end{theorem}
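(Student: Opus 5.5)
The plan is to deduce \cref{thm:gcmc-log-conc} from \cref{thm:gcmc} in three steps: make the counterexample compactly supported, use heat smoothing to make it log-concave, and use analyticity to carry the failure at \(t_0\) over to the smoothed measure. The argument is by contradiction. It uses the semigroup identity \(H_{P_s\mu}(t)=H_\mu(s+t)\): if GCM held to all orders for \(P_s\mu\), then \(-H_\mu'\) would be completely monotone on \((s,\infty)\). Bernstein--Widder theory then lets us push this back to all of \((0,\infty)\), which contradicts \(H_\mu^{(5)}(t_0)>0\).

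\textbf{Step 1 (compact support).} Let \(\mu\) be the measure from \cref{thm:gcmc}. If it is not already compactly supported, replace it by the normalized restriction \(\mu_R\) of \(\mu\) to \([-R,R]\). For fixed \(t_0>0\), \(H^{(5)}_{\mu}(t_0)\) is an integral of a polynomial in \(\partial_x^k \log(p_{t_0}*\mu)\) against \(p_{t_0}*\mu\), since time derivatives become space derivatives via \(\partial_t=\partial_x^2\). These quantities are controlled by posterior moments of \(X\) given \(X+\sqrt{2t_0}G\). So I expect \(H^{(5)}_{\mu_R}(t_0)\to H^{(5)}_\mu(t_0)\) as \(R\to\infty\), provided \(\mu\) has enough moments; the explicit counterexample presumably does. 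Hence we may assume \(\operatorname{supp}\mu\subset[-R,R]\).

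\textbf{Step 2 (log-concavity).} For \(f_s=p_s*\mu\), the standard Tweedie-type identity gives
\[
(\log f_s)''(y)=-\frac{1}{2s}+\frac{\operatorname{Var}(X\mid X+\sqrt{2s}G=y)}{4s^2}\le -\frac1{2s}+\frac{R^2}{4s^2}.
\]
This is negative once \(s>R^2/2\), so \(\nu_s:=P_s\mu\) is log-concave for all such \(s\). Suppose, for contradiction, that the conclusion of \cref{thm:gcmc-log-conc} failed. Then for one fixed \(s>R^2/2\) we would have \((-1)^mH_{\nu_s}^{(m)}(t)\ge0\) for all \(m\ge1\) and \(t>0\). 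That is, \(g:=-H_\mu'\) would be completely monotone on \((s,\infty)\). By Bernstein's theorem, \(g(t)=\int_{[0,\infty)}e^{-\lambda (t-s)}\,d\rho(\lambda)\) for some positive measure \(\rho\), valid for \(t>s\).

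\textbf{Step 3 (analytic continuation; main obstacle).} The key fact to establish is that \(H_\mu\) is real-analytic on \((0,\infty)\) for compactly supported \(\mu\). Write \(p_t*\mu(x)=\int p_t(x-y)\,d\mu(y)\); this extends holomorphically to complex \(t\) near the positive axis. For real \(t\) it is bounded below by a Gaussian \(c\,e^{-C x^2/t}\) thanks to compact support, and this lower bound should survive small complex perturbations of \(t\). Then \(\log(p_t*\mu)\) is holomorphic in \(t\) with quadratic growth in \(x\), and the entropy integral is holomorphic by dominated convergence; making these uniform bounds precise is where I expect most of the work.

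Granting analyticity, write \(g(t)=\tilde g(t-s)\) with \(\tilde g(u)=\int e^{-\lambda u}\,d\rho(\lambda)\), a Laplace transform of a positive measure. By the Pringsheim--Landau--Widder theorem, its real abscissa of convergence \(\sigma\) is a singular point of \(\tilde g\). Since \(\tilde g\) agrees with the analytic function \(u\mapsto g(u+s)\) on \(u>-s\), we get \(\sigma\le -s\). Hence the representation holds for all \(t>0\), so \(g\) is completely monotone on \((0,\infty)\). In particular \(g^{(4)}(t_0)=-H_\mu^{(5)}(t_0)\ge0\), which contradicts Step 1. Thus for every \(s>R^2/2\) the log-concave measure \(\nu_s\) violates \eqref{eq:GCM} at some order \(m\) and some time, proving \cref{thm:gcmc-log-conc} (non-constructively in \(m\)).
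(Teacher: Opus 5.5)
Your argument is correct and has the same skeleton as the paper's proof. You take a compactly supported counterexample, run the heat flow past time $R^2/2$ to get log-concavity via the posterior-variance identity, and use real analyticity of $H_\mu$ on $(0,\infty)$ to carry complete monotonicity of $-H_\mu'$ from $(T,\infty)$ back down to $t_0=1/3$.

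The genuine difference is the carrying-back step. The paper proves it directly in Lemma 1. Wherever all the sign conditions hold, the Taylor series of $U=-F'$ has sign-alternating coefficients, so the conditions persist slightly to the left, and closedness plus connectedness gives all of $(-T,\infty)$. You instead use Bernstein's representation together with the Pringsheim--Landau theorem: the abscissa of convergence of the Laplace transform of a positive measure is a singular point. Your conclusion $\sigma\le -s$ is correct. The two functions agree on $(\max(\sigma,-s),\infty)$ by the identity theorem, so $g(\cdot+s)$ would continue $\tilde g$ analytically across any $\sigma>-s$. Your route is quicker once the classical theorems are granted. It is really the same mechanism, since Landau's theorem is proved by exactly that sign-definite Taylor expansion. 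The paper's version is self-contained and never needs the integral representation.

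Two smaller points. First, your Step 1 can be deleted. The measure of Proposition 1 is finitely supported in $[-8.8,8.8]$, so any $T>8.8^2/2$ works; as written, the claimed convergence $H^{(5)}_{\mu_R}(t_0)\to H^{(5)}_\mu(t_0)$ is only a heuristic.

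Second, for analyticity (which the paper also treats briefly), the needed bounds for complex $t$ do not follow by perturbing the real-$t$ bound uniformly in $x$. This concerns both $|g_t(x)|$ from below and a branch of $\log g_t(x)$. The relative phases of the Gaussian terms drift linearly in $x$, so no uniform perturbation estimate holds. Instead, factor out the term of the extreme atom on the same side as $x$. It dominates the other terms by a factor exponential in $|x|$ whenever $\operatorname{Re}(1/t)>0$, and a compact range of $x$ can be handled by continuity. This gives $|g_t\log g_t|\le C(1+x^2)e^{-cx^2}$ uniformly for $t$ in a small complex disk around each positive time. Holomorphy of $\int g_t\log g_t\,dx$ then follows by dominated convergence.
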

An explicit order is not obtained, but it must be at least \(6\), because the GCM conjecture is known to hold for log-concave probability measures up to the fifth order \cite{zhang2018gaussian,wang2025higher}.

\ifdefined\isarxiv
\paragraph{Acknowledgements} We thank Mehtaab Sawhney for helpful discussions.
\fi

\section{Proof of \texorpdfstring{\cref{thm:gcmc}}{Theorem 1}} \label{sec:proof-thm-gcmc}

Let \(\mu\) be the probability measure
\begin{equation}\label{eq:atomic-counterexample}
   \mu
   =
   0.002\,\delta_0
   +
   \sum_{j=0}^{7}
   c_j
   \left(\delta_{1.8+j}+\delta_{-(1.8+j)}\right),
\end{equation}
where
\begin{equation}\label{eq:c-vector}
   c=(0.034,\ 0.093,\ 0.134,\ 0.123,\ 0.075,\ 0.031,\ 0.008,\ 0.001).
\end{equation}
For \(t>0\), write
\[
   g_t=p_t*\mu,
   \qquad
   H_\mu(t)=\int_\R g_t(x)\log g_t(x)\,\dd x .
\]
Since \(\mu\) has finite support, \(g_t\) is a finite Gaussian mixture; in particular, \(H_\mu(t)\in\R\) for every \(t>0\).

\begin{proposition}\label{prop:certified-fifth}
For \(\mu\) defined by \eqref{eq:atomic-counterexample},
\[
   0.36 < H_\mu^{(5)}(1/3) < 0.37 .
\]
In particular, \(H_\mu^{(5)}(1/3)>0\).
\end{proposition}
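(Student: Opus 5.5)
The plan is a certified numerical evaluation, after first rewriting \(H_\mu^{(5)}\) as one integral of an explicit smooth integrand. Since \(\partial_t g_t=\partial_x^2 g_t\), each time derivative is a spatial second derivative. Differentiating \(H_\mu(t)=\int g\log g\) and integrating by parts, with all boundary terms vanishing because \(g_t\) is a finite Gaussian mixture with Gaussian tails, gives
\[
H'_\mu(t)=-\int \frac{(\partial_x g)^2}{g}\,\dd x .
\]
Differentiating four more times gives \(H^{(5)}_\mu(t)=\int F(g,\partial_x g,\dots,\partial_x^{10} g)\,\dd x\), where \(F\) is a rational expression in the derivatives with denominators powers of \(g\).

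It is cleaner to work in terms of \(\ell=\log g\), whose derivatives are bounded polynomially. Alternatively, I would take the representation used by Cheng--Geng \cite{cheng2015higher}: \(H^{(m)}\) is written as \(\int g\,Q_m(\ell',\ell'',\dots)\,\dd x\) for an explicit polynomial \(Q_m\), after integration by parts. In either form the integrand is explicit and computable once \(g_t\) and its derivatives are known. For our 17-atom mixture at \(t=1/3\) we have
\[
g=\sum_i w_i\,p_{1/3}(x-a_i),
\]
and its derivatives are the same sums with each Gaussian multiplied by a Hermite polynomial. The resulting integrand is a finite combination of Gaussians times rational functions of such sums.

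The key steps are as follows.
\begin{enumerate}
\item Derive \(Q_5\) symbolically and check it against known low orders.
\item Truncate the integral to \([-L,L]\) with, say, \(L=20\). The tail is bounded rigorously: there \(g\) is dominated by the outermost atom, so \(\ell^{(k)}\) is bounded by explicit polynomials in \(x\), while \(g\) itself is at most \(e^{-(|x|-8.8)^2\cdot 3/4}\). This makes the tail contribution astronomically small.
\item On \([-L,L]\), cover the interval by small subintervals and evaluate the integrand using interval arithmetic, or use a quadrature rule with a rigorous error term. For the error term, bound a high derivative of the integrand on complex strips, or use Taylor models.
\item Sum the enclosures to get an interval inside \((0.36,0.37)\).
\end{enumerate}
Because the integrand is analytic and the gap of \(0.01\) between the bounds is large, a modest mesh should suffice.

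The main obstacle is making the quadrature rigorous. The integrand involves division by \(g\), which becomes tiny between and beyond atoms, so naive interval enclosures of the ratios \(g^{(k)}/g\) blow up. I would handle this by writing \(g=w_{i^*}p(x-a_{i^*})\,(1+\text{rest})\) locally, where \(i^*\) is the nearest dominant atom. Then \(\ell\) and its derivatives become expressions in the well-conditioned quantities \(e^{\text{(differences of exponents)}}\), which keeps the interval widths tight. Summing over roughly \(10^4\) subintervals with directed rounding, for example via MPFI or Arb, should then certify the stated bounds.
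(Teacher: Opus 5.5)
Your plan is correct and has the same overall architecture as the paper's proof: an explicit analytic integrand for $H_\mu^{(5)}(1/3)$, rigorous ball-arithmetic quadrature on a bounded window, and an explicit Gaussian tail bound. The difference is in how the integrand is produced.

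\textbf{How the paper builds the integrand.} It never integrates by parts. It differentiates $g_t\log g_t$ in $t$ under the integral sign, using $\partial_t^n\rho_i=\mathsf H_{2n}\bigl(\tfrac{x-a_i}{2\sqrt t}\bigr)\rho_i/(4t)^n$ for each Gaussian component. Hence $K_n=\partial_t^n g_t/g_t$ is a posterior average of Hermite polynomials with weights $\pi_i=\rho_i/g_t\in[0,1]$. It then gets $L_n=\partial_t^n\log g_t$ from the recursion implied by $g_t=e^{\log g_t}$, and assembles $G=g_t\sum_r\binom5r K_rL_{5-r}$ by Leibniz' rule.

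\textbf{What this buys.} There is no symbolic $Q_5$ to derive and cross-check, and no boundary terms to track. The posterior weights are exactly the well-conditioned quantities your dominant-atom factorization is after. Quadrature is delegated to Arb's rigorous adaptive integrator with 256-bit complex balls, using evenness and only $22$ blocks on $[0,25]$, plus a crude tail bound beyond $25$.

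\textbf{The cost, and a caution about your heuristic.} The raw integrand cancels severely. It contains the strongly oscillating term $(\partial_t^5 g_t)\log g_t$. The certified block values $2\int_I G$ reach about $219$ in absolute value while summing to about $0.366$, so $\int_\R|G|>10^3$. The $0.01$ margin must therefore be measured against $\int_\R|G|$, not against the value of the integral. For the paper's integrand this means relative accuracy of a few parts in $10^6$, which 256-bit balls deliver easily but which a ``modest mesh'' with low-precision intervals might not. Your integrated-by-parts form involves only $\ell^{(k)}$ with $k\ge1$ and no quadratically growing $\log g$ factor, so it is plausibly much better conditioned and has a simpler tail, at the cost of the symbolic derivation.

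\textbf{What remains.} For a statement of this kind the proof is the certified computation itself. Your outline is a valid recipe, but it does not by itself place the value in $(0.36,0.37)$.
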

Clearly, \cref{thm:gcmc} follows from \cref{prop:certified-fifth}.

\begin{proof}[Proof of \cref{prop:certified-fifth}]
Let the atoms of \(\mu\) be
\[
   a_0=0,
   \qquad
   a_{j,+}=1.8+j,
   \qquad
   a_{j,-}=-(1.8+j),
   \qquad 0\le j\le 7,
\]
with corresponding weights
\[
   w_0=0.002,
   \qquad
   w_{j,+}=w_{j,-}=c_j.
\]
Define
\[
   \rho_i(x,t)
   =w_i(4\pi t)^{-1/2}
      \exp\left(-\frac{(x-a_i)^2}{4t}\right),
   \qquad
   g_t(x)=\sum_i\rho_i(x,t).
\]
We use the physicists' Hermite polynomials \(\mathsf H_k\), given by
\[
   \frac{d^k}{dz^k}e^{-z^2}=(-1)^k\mathsf H_k(z)e^{-z^2}.
\]
Since \(p_t\) solves \(\partial_t p_t=\partial_x^2p_t\), for every \(n\ge1\),
\begin{equation}\label{eq:heat-hermite}
   \partial_t^n\rho_i(x,t)
   =
   \mathsf H_{2n}\!\left(\frac{x-a_i}{2\sqrt t}\right)
   \frac{\rho_i(x,t)}{(4t)^n}.
\end{equation}
Set
\[
   \pi_i(x,t)=\frac{\rho_i(x,t)}{g_t(x)},
   \qquad
   K_n(x,t)=\frac{\partial_t^n g_t(x)}{g_t(x)}.
\]
Then \(K_0=1\), and \eqref{eq:heat-hermite} gives
\begin{equation}\label{eq:K-formula}
   K_n(x,t)
   =
   \sum_i 
   \frac{\pi_i(x,t)}{(4t)^{n}}   \mathsf H_{2n}\!\left(\frac{x-a_i}{2\sqrt t}\right),
   \qquad n\ge1 .
\end{equation}
Next put
\[
   L_n(x,t)=\partial_t^n\log g_t(x).
\]
Thus \(L_0=\log g_t\). Since \(g_t=e^{\log g_t}\), differentiating \(n\) times and solving for \(L_n\) yields, for \(n\ge1\),
\begin{equation}\label{eq:L-recursion}
   L_n
   =
   K_n
   -
   \sum_{r=1}^{n-1}
   \binom{n-1}{r-1}L_rK_{n-r}.
\end{equation}
Finally, by Leibniz's rule,
\begin{equation}\label{eq:G-def}
   \partial_t^5(g_t\log g_t)
   =
   g_t
   \sum_{r=0}^{5}\binom{5}{r}K_rL_{5-r}.
\end{equation}
At \(t_0=1/3\), define
\begin{equation}\label{eq:Gx-def}
   G(x)=
   g_{t_0}(x)
   \sum_{r=0}^{5}\binom{5}{r}K_r(x,t_0)L_{5-r}(x,t_0).
\end{equation}
All derivatives in \eqref{eq:G-def} are bounded by a polynomial in \(|x|\) times a Gaussian tail, so differentiating under the integral sign is justified and
\begin{equation}\label{eq:H5-integral}
   H_\mu^{(5)}(t_0)=\int_\R G(x)\,\dd x.
\end{equation}
The measure \(\mu\) is symmetric, so \(G\) is even and
\[
   H_\mu^{(5)}(t_0)=2\int_0^\infty G(x)\,\dd x.
\]

It remains to certify this one-dimensional integral. For each interval \(I\), let
\[
   C_I=2\int_I G(x)\,\dd x.
\]
The following table records the finite part of a ball-arithmetic certificate. Each displayed value \(M_I\) is rounded to four decimal places, and the SageMath/Arb computation in \cref{app:verification-code}, using exact rational input data and 256-bit complex balls, proves
\begin{equation}\label{eq:table-cert-radius}
   C_I\in[M_I-10^{-4},M_I+10^{-4}].
\end{equation}
The computation evaluates the exact formulae \eqref{eq:K-formula}, \eqref{eq:L-recursion}, and \eqref{eq:Gx-def}.
\[
\begin{array}{c|r@{\qquad}c|r}
I & M_I & I & M_I \\
\hline
$[0,0.5]$ & 161.9975 & $[5.5,6]$ & -3.3394\\
$[0.5,1]$ & -219.0117 & $[6,6.5]$ & 29.4837\\
$[1,1.5]$ & 73.0107 & $[6.5,7]$ & -45.4866\\
$[1.5,2]$ & 1.8755 & $[7,7.5]$ & 50.1159\\
$[2,2.5]$ & -49.7406 & $[7.5,8]$ & -43.9915\\
$[2.5,3]$ & 78.9371 & $[8,8.5]$ & 29.8270\\
$[3,3.5]$ & -92.4682 & $[8.5,9]$ & -14.0523\\
$[3.5,4]$ & 90.8184 & $[9,10]$ & 5.7303\\
$[4,4.5]$ & -79.1908 & $[10,12]$ & -3.1009\\
$[4.5,5]$ & 58.8918 & $[12,15]$ & 0.3030\\
$[5,5.5]$ & -30.2433 & $[15,25]$ & -0.0000
\end{array}
\]
We now bound the omitted tail. Since the support of \(\mu\) is contained in \([-9,9]\), for \(x\ge25\) we have
\[
   g_{t_0}(x)
   \le
   (4\pi t_0)^{-1/2}
   \exp\left(-\frac{(x-9)^2}{4t_0}\right)
   \le
   \exp\left(-\frac34(x-9)^2\right).
\]
Let
\[
   Q(x)=\sum_{r=0}^{5}\binom{5}{r}K_r(x,t_0)L_{5-r}(x,t_0),
   \qquad G(x)=g_{t_0}(x)Q(x).
\]
For \(x\ge25\), the inequalities \(|a_i|\le8.8\) and
\[
   \left|\frac{x-a_i}{2\sqrt{t_0}}\right|
   \le 1.13(1+x)
\]
combined with the explicit Hermite polynomials up to degree ten give
\[
   |K_n(x,t_0)|\le A_n(1+x)^{2n},
   \qquad
   A=(1,6,56,825,15472,350000),
   \qquad 0\le n\le5.
\]
Also, \(|L_0(x,t_0)|=|\log g_{t_0}(x)|\le2(1+x)^2\) on
\([25,\infty)\). Indeed, the upper bound above gives \(g_{t_0}(x)<1\), while
the atom of mass \(10^{-3}\) at \(8.8\) gives
\[
   g_{t_0}(x)\ge
   10^{-3}(4\pi t_0)^{-1/2}
   \exp\left(-\frac{(x-8.8)^2}{4t_0}\right),
   \qquad x\ge25.
\]
Since \(t_0=1/3\), this implies
\[
   -\log g_{t_0}(x)
   \le
   \log\!\left(10^3(4\pi t_0)^{1/2}\right)
   +\frac34(x-8.8)^2
   \le 2(1+x)^2,
   \qquad x\ge25.
\]
Applying the recursion \eqref{eq:L-recursion} gives, for \(1\le n\le5\),
\[
   |L_n(x,t_0)|\le B_n(1+x)^{2n},
   \qquad
   B=(6,92,2265,76648,3347024).
\]
Consequently,
\[
   |Q(x)|\le 10^8(1+x)^{12},
   \qquad x\ge25.
\]
Finally, \((1+x)^{12}\le \exp((x-9)^2/4)\) for \(x\ge25\), and the elementary Gaussian tail bound gives
\begin{align}
   \left|2\int_{25}^{\infty}G(x)\,\dd x\right|
   &\le
   2\cdot10^8\int_{25}^{\infty}e^{-\frac12(x-9)^2}\,\dd x  \\
   &\le
   2\cdot10^8\frac{e^{-128}}{16}
   <10^{-20}. \label{eq:tail-cert}
\end{align}
Adding the intervals in the table, using \eqref{eq:table-cert-radius} for the \(22\) finite intervals and \eqref{eq:tail-cert} for the tail, gives
\[
   0.36
   <
   H_\mu^{(5)}(1/3)
   <
   0.37.
   \qedhere
\]
\end{proof}

\section{Proof of \texorpdfstring{\cref{thm:gcmc-log-conc}}{Theorem 2}} \label{sec:proof-thm-gcmc-log-conc}

We now show that the preceding counterexample leads to a log-concave counterexample, at a higher order. The additional ingredient is backward propagation of complete monotonicity for analytic functions.

\begin{lemma}[Backward propagation of complete monotonicity]\label{lem:backward}
Let \(A>0\), and let \(F\) be real analytic on \((-A,\infty)\). Suppose that
\[
   (-1)^mF^{(m)}(s)\ge0,\qquad m\ge1,\ s>0.
\]
Then the same inequalities hold for all \(m\ge1\) and all \(s\in(-A,\infty)\).
\end{lemma}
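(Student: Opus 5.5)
The plan is to use Bernstein's theorem on completely monotone functions together with the identity principle for real analytic functions. Set \(f=-F'\). The hypothesis says \((-1)^k f^{(k)}(s)\ge 0\) for all \(k\ge0\) and \(s>0\), so \(f\) is completely monotone on \((0,\infty)\). By Bernstein's theorem there is a nonnegative Borel measure \(\nu\) on \([0,\infty)\) with
\[
   f(s)=\int_{[0,\infty)} e^{-\lambda s}\,\nu(d\lambda),\qquad s>0.
\]
Once we know this Laplace representation extends to \(s\in(-A,\infty)\), the conclusion follows at once. Differentiating under the integral gives \((-1)^k f^{(k)}(s)=\int \lambda^k e^{-\lambda s}\,\nu(d\lambda)\ge0\), which is \((-1)^{m}F^{(m)}(s)\ge0\) with \(m=k+1\).

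The key step is to show that \(\int e^{-\lambda s}\,\nu(d\lambda)<\infty\) for every \(s>-A\), and that this integral equals \(f(s)\) there. I would argue via the Taylor expansion at a point \(s_0>0\). For \(s_0>0\),
\[
   f^{(k)}(s_0)=(-1)^k\int\lambda^k e^{-\lambda s_0}\,\nu(d\lambda).
\]
Using Tonelli, for \(h>0\) the series
\[
   \sum_k \frac{h^k}{k!}(-1)^k f^{(k)}(s_0)=\int e^{-\lambda(s_0-h)}\,\nu(d\lambda)
\]
has nonnegative terms. Because the terms are nonnegative, the Taylor series of \(f\) at \(s_0\) converges at \(s_0-h\) exactly when \(\int e^{-\lambda(s_0-h)}\,d\nu<\infty\).

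Define \(R=\inf\{s:\int e^{-\lambda s}\,d\nu<\infty\}\). The Laplace transform \(\Phi(s)=\int e^{-\lambda s}\,d\nu\) is real analytic on \((R,\infty)\) and agrees with \(f\) on \((0,\infty)\). By the identity principle, \(\Phi=f\) on \((\max(R,-A),\infty)\). Now suppose \(R>-A\). We need a contradiction, which is a real-variable analogue of Pringsheim's theorem: a Laplace transform of a positive measure has a singularity at the abscissa of convergence. Here is how I would get it. Since \(f\) is analytic at \(R\), it is analytic on a neighborhood \((R-2\delta,R+2\delta)\). Take \(s_0=R+\delta\). Analyticity of \(f\) near \(s_0\) only guarantees convergence of the Taylor series in some radius, and that radius might be smaller than \(\delta\). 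This is the delicate point.

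To handle it, I would use the fact that a real analytic function on an interval extends holomorphically to a complex neighborhood \(U\) of \([R-\delta', R+\delta]\). The Taylor series at \(s_0\) of the holomorphic extension then converges on the largest disc in \(U\). To control this, I would compare with the Laplace transform \(\Phi(z)\), which is holomorphic on \(\{\operatorname{Re} z>R\}\). Its Taylor coefficients at \(s_0\) are \((-1)^k\int\lambda^k e^{-\lambda s_0}\,d\nu/k!\), and their signs alternate. So the radius of convergence is determined on the real axis (Pringsheim). The holomorphic function \(\Phi\) on \(\{\operatorname{Re} z>R\}\) coincides with the extension of \(f\) near the real segment, so together they give a holomorphic function on a disc around \(s_0\) reaching past \(R\) on the real axis. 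By Pringsheim, the radius of the series at \(s_0\) then exceeds \(s_0-R\), since the only obstruction would be the real point \(s_0-r\). Hence \(\int e^{-\lambda(R-\epsilon)}\,d\nu<\infty\), contradicting the definition of \(R\). Therefore \(R\le -A\).

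The main obstacle is this Pringsheim/Landau-type step, justifying convergence of the Laplace integral below \(0\). I would cite Widder's theorem (Landau's theorem for Laplace transforms with nonnegative measure: the real point of the abscissa of convergence is a singularity) rather than reprove it. Everything else is bookkeeping with Tonelli and the identity principle.
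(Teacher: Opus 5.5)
Your argument is correct, but it takes a different and heavier route than the paper. The paper never uses Bernstein's theorem. With $U=-F'$, it looks at the set of $s\in(-A,\infty)$ where all the inequalities $(-1)^nU^{(n)}(s)\ge0$ hold; this set is closed and contains $[0,\infty)$. At any point $c$ of the set, the Taylor series of $U^{(n)}$ at $c$, evaluated at $c-h$, has only nonnegative terms. So the set contains $(c-h,c]$ for small $h$, and an infimum/connectedness argument gives all of $(-A,\infty)$.

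You instead write $-F'$ as the Laplace transform of a positive measure $\nu$ and show that the abscissa of convergence satisfies $R\le-A$, via a Landau/Widder singularity theorem. That step needs a holomorphic extension near $R$, glued to $\Phi$ on $\{\mathrm{Re}\,z>R\}$; take the neighborhood of $R$ to be a disc so the overlap is connected. Your appeal to Pringsheim there is unnecessary. Holomorphy of the glued function on a disc about $s_0$ of radius larger than $s_0-R$ already makes the Taylor series converge at a real point below $R$, and nonnegativity of the terms plus Tonelli then finishes. Your version shows the structure (the only possible obstruction is the abscissa of convergence of the Bernstein measure), while the paper's is elementary and self-contained.

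Your ``delicate point'' also disappears if you expand at $R$ itself instead of at $s_0=R+\delta$. By monotone convergence as $s\downarrow R$, $(-1)^kf^{(k)}(R)=\int\lambda^ke^{-\lambda R}\,\nu(d\lambda)$, which is finite by continuity of $f^{(k)}$ at $R$. The real-analytic Taylor series of $f$ at $R$ converges at $R-h$ for small $h>0$, so Tonelli gives $\int e^{-\lambda(R-h)}\,\nu(d\lambda)<\infty$ with no complex analysis. That simplified version is essentially the paper's local Taylor argument, and it shows Bernstein's theorem is not really needed.
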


\begin{proof}
This is a standard consequence of the local Taylor-series characterization of
completely monotone functions; see, e.g., \cite{widder1941laplace,schilling2012bernstein}.
For completeness, we recall the short argument. Set \(U=-F'\). Then \(U\) is
completely monotone on \((0,\infty)\), and all inequalities
\((-1)^nU^{(n)}\ge0\) extend to \(0\) by continuity. If they hold at a point
\(c\), then the Taylor expansion at \(c\) gives, for \(h\ge0\) sufficiently small,
\[
   (-1)^nU^{(n)}(c-h)
   =
   \sum_{j=0}^{\infty}
   \frac{(-1)^{n+j}U^{(n+j)}(c)}{j!}h^j
   \ge0 .
\]
Thus the inequalities propagate locally to the left; a standard connectedness
argument propagates them throughout \((-A,0]\).
\end{proof}


\begin{proof}[Proof of \cref{thm:gcmc-log-conc}]
Let \(\mu\) be the finitely supported probability measure in \eqref{eq:atomic-counterexample}.
We use two elementary facts. First, if a probability measure is supported in \([-R,R]\), then its heat evolution is strictly log-concave for every \(t>R^2/2\). Indeed, writing \(\pi_x\) for the posterior law of the initial atom conditioned on the endpoint \(x\), one has
\[
   \partial_x^2\log(p_t*\mu)(x)
   =-\frac{1}{2t}+\frac{\operatorname{Var}_{\pi_x}(a)}{4t^2}
   \le -\frac{1}{2t}+\frac{R^2}{4t^2}<0.
\]
Second, for a finitely supported initial measure, \(H_\mu(t)\) is real
analytic on \((0,\infty)\). Indeed, locally uniformly for \(t\) in a compact
subinterval of \((0,\infty)\), the finite Gaussian mixture \(g_t\) and all its
\(t\)-derivatives are bounded by a polynomial in \(|x|\) times \(e^{-c x^2}\).
Moreover \(g_t(x)\) has matching Gaussian lower bounds coming from the extreme
atoms, so \(|\log g_t(x)|\le C(1+x^2)\). Thus the Taylor expansion in \(t\) may
be integrated termwise by Gaussian domination.

Take \(T\) large enough so that
\(
   \nu=P_T\mu
\)
is a log-concave probability measure.
Suppose, for contradiction, that the GCM conjecture holds for \(\nu\). Then
\[
   (-1)^m\frac{d^m}{ds^m}\calH(P_s\nu)\ge0,
   \qquad m\ge1,
   \qquad s>0.
\]
But \(P_s\nu=P_{T+s}\mu\). Thus the function
\[
   F(s)=H_\mu(T+s)
\]
is real analytic on \((-T,\infty)\) and satisfies
\[
   (-1)^mF^{(m)}(s)\ge0,
   \qquad m\ge1,
   \ s>0.
\]
By \cref{lem:backward}, the same inequalities hold for all \(m\ge1\) and all \(s>-T\). Taking \(s=1/3-T\) and \(m=5\) gives
\[
   0\le (-1)^5F^{(5)}(1/3-T)=-H_\mu^{(5)}(1/3),
\]
which contradicts \cref{prop:certified-fifth}.
\end{proof}

\bibliographystyle{alpha}
\bibliography{ref}

\appendix
\section{Verification code}\label{app:verification-code}

The following SageMath script gives the rigorous verification used in
\cref{prop:certified-fifth}. The input atoms and weights are exact rationals.
All integrations are carried out with Arb ball arithmetic \cite{johansson2017arb}
through SageMath's \texttt{ComplexBallField.integral}; the printed balls are
rigorous enclosures.

\begin{lstlisting}[language=Python]
from sage.all import ComplexBallField, RealBallField, QQ, binomial

# Increasing PREC tightens the final balls but does not change the input data.
PREC = 256
C = ComplexBallField(PREC)
R = RealBallField(PREC)

tau = QQ(1) / QQ(3)
tau_C = C(tau)
normalization = 1 / (4 * C.pi() * tau_C).sqrt()

# Exact rational atoms and weights for
# 0.002 delta_0 + sum_j c_j (delta_{1.8+j} + delta_{-(1.8+j)}).
data = [(QQ(0), QQ(2) / QQ(1000))]
c = [34, 93, 134, 123, 75, 31, 8, 1]
for j, cj in enumerate(c):
    a = QQ(18) / QQ(10) + QQ(j)
    w = QQ(cj) / QQ(1000)
    data.append((-a, w))
    data.append(( a, w))
data.sort(key=lambda z: z[0])

assert sum(w for _, w in data) == 1


def hermite_phys(n, z):
    """Physicists' Hermite polynomial H_n(z), evaluated by recursion."""
    parent = z.parent()
    if n == 0:
        return parent(1)
    H0 = parent(1)
    H1 = 2 * z
    if n == 1:
        return H1
    for k in range(1, n):
        H0, H1 = H1, 2 * z * H1 - 2 * k * H0
    return H1


def G(z, analytic):
    """Ball enclosure for d^5/dt^5 (g_t log g_t) at t = 1/3."""
    rhos = []
    for a, w in data:
        y = z - C(a)
        rho = C(w) * normalization * (-(y * y) / (4 * tau_C)).exp()
        rhos.append(rho)

    g = sum(rhos, C(0))
    posterior = [rho / g for rho in rhos]

    K = [C(0)] * 6
    K[0] = C(1)
    sqrt_tau = tau_C.sqrt()
    for n in range(1, 6):
        s = C(0)
        for (a, _), pi_i in zip(data, posterior):
            y = z - C(a)
            u = y / (2 * sqrt_tau)
            s += pi_i * hermite_phys(2 * n, u) / ((4 * tau_C) ** n)
        K[n] = s

    L = [C(0)] * 6
    # Passing Arb's analytic flag makes log reject invalid branch enclosures.
    L[0] = g.log(analytic=analytic)
    for n in range(1, 6):
        s = K[n]
        for r in range(1, n):
            s -= binomial(n - 1, r - 1) * L[r] * K[n - r]
        L[n] = s

    q = C(0)
    for r in range(0, 6):
        q += binomial(5, r) * K[r] * L[5 - r]
    return g * q


def integrate_real(a, b):
    """Rigorous real ball for integral_a^b G(x) dx."""
    I = C.integral(
        G,
        C(a),
        C(b),
        abs_tol=R("1e-30"),
        rel_tol=R("1e-30"),
        use_heap=True,
    )
    # The exact integral is real. This assertion catches failed analytic
    # continuation or insufficient subdivision.
    assert I.imag().contains_zero()
    assert I.imag().above_abs() < R("1e-25")
    return I.real()


# The displayed midpoints in Proposition 1, with exact rational intervals.
blocks = [
    (QQ(0), QQ(1) / 2, "161.9975"),
    (QQ(1) / 2, QQ(1), "-219.0117"),
    (QQ(1), QQ(3) / 2, "73.0107"),
    (QQ(3) / 2, QQ(2), "1.8755"),
    (QQ(2), QQ(5) / 2, "-49.7406"),
    (QQ(5) / 2, QQ(3), "78.9371"),
    (QQ(3), QQ(7) / 2, "-92.4682"),
    (QQ(7) / 2, QQ(4), "90.8184"),
    (QQ(4), QQ(9) / 2, "-79.1908"),
    (QQ(9) / 2, QQ(5), "58.8918"),
    (QQ(5), QQ(11) / 2, "-30.2433"),
    (QQ(11) / 2, QQ(6), "-3.3394"),
    (QQ(6), QQ(13) / 2, "29.4837"),
    (QQ(13) / 2, QQ(7), "-45.4866"),
    (QQ(7), QQ(15) / 2, "50.1159"),
    (QQ(15) / 2, QQ(8), "-43.9915"),
    (QQ(8), QQ(17) / 2, "29.8270"),
    (QQ(17) / 2, QQ(9), "-14.0523"),
    (QQ(9), QQ(10), "5.7303"),
    (QQ(10), QQ(12), "-3.1009"),
    (QQ(12), QQ(15), "0.3030"),
    (QQ(15), QQ(25), "-0.0000"),
]

finite_total = R(0)
for a, b, midpoint in blocks:
    block = 2 * integrate_real(a, b)
    finite_total += block
    displayed = R(midpoint)
    assert (block - displayed).above_abs() < R("1e-4")
    print(f"2 * integral_[{a}, {b}] G = {block}")

# Tail estimate from the proof:
#   |2 int_25^infty G(x) dx| <= 2 * 10^8 * exp(-128) / 16.
tail_bound = 2 * R("1e8") * (-R(128)).exp() / R(16)
assert tail_bound.upper() < R("1e-20").lower()

lower = finite_total.lower() - tail_bound.upper()
upper = finite_total.upper() + tail_bound.upper()

print("finite part =", finite_total)
print("tail bound  =", tail_bound)
print("certified H^(5)(1/3) interval = [", lower, ",", upper, "]")

assert lower > R("0.36").upper()
assert upper < R("0.37").lower()
\end{lstlisting}

\end{document}